\documentclass[12pt]{article}
\usepackage{amsthm}

\usepackage{times}
\usepackage{newtxmath}
\usepackage[margin=1.0in]{geometry}

\newtheorem{conjecture}{Conjecture}

\newtheorem{theorem}{Theorem}

\theoremstyle{definition}
\newtheorem{reference}{Reference}
\newtheorem{definition}{Definition}
\newtheorem{remark}{Remark}
\newtheorem{assumption}{Assumption}

\begin{document}

\title{$\infty$-Categorical Generalized Langlands Correspondence III: $\infty$-Stackification}
\author{Xin Tong}
\date{}

\maketitle

\begin{abstract}
\noindent We discuss further around the generalized Langlands Program, by using $\infty$-categoricalization and $\infty$-analytic stackification.
\end{abstract}

\newpage

\tableofcontents

\newpage
\begin{reference}
A: \cite{La}, \cite{DHKM}, \cite{DA}, \cite{DB}, \cite{VL}, \cite{DK}, \cite{A}, \cite{XZ}, \cite{GL}, \cite{K1}, \cite{KX}, \cite{BS}, \cite{FS}, \cite{Z}, \cite{EGH}, \cite{CA}; B: \cite{KLA}, \cite{KLB}, \cite{SchA}, \cite{SchB}, \cite{SchC}, \cite{FA}, \cite{Ta}, \cite{BS}, \cite{KPX}; C: \cite{CSA}, \cite{CSB}, \cite{CSC}, \cite{BBBK}; D: \cite{BS2}, \cite{BLA}, \cite{DC}, \cite{SchD}, \cite{SALBRC}, \cite{TA}, \cite{TB}, \cite{TC}. 
\end{reference}

\begin{assumption}
Let $n\geq 2$. Here $n$ will be a key parameter for our consideration. We will have $\mathrm{Gal}_{X,n}$, $\mathrm{Weil}_{X,n}$ and all the related objects parametrized by $n$ related to the mixed-parity generalization. We will also fix the prime number $p$ and $\ell$, with the local nonarchimedean field $X$, where $\ell$ is the coefficient characteristic, we make the requirement on $\overline{\mathbb{Q}}_\ell$ as in \cite{FS} with respect to the corresponding $|\mathcal{O}_X/\mathfrak{m}_{\mathcal{O}_X}|$.
\end{assumption}

\newpage

\section{Condensation Conjectures}

\subsection{Condensation Conjectures}

\noindent Although Langlands conjecture is slightly relatively more  representation theoretic in \cite{La}, many of the current existing Langlands parametrization happen over certain moduli stacks, such as the group double quotients for the number fields, the group double quotients for the local fields, where the corresponding $\ell$-adic perverse sheaves will capture the action from the motivic Galois groups and the Langlands dual groups, i.e. a map to the Bernstein centers. Condensation is needed in the local setting, and especially when we are using certain analytic methods, such as Scholze's diamonds and even the corresponding analytic cohomologies. The current paper is along certain direction generalizing significantly the paper \cite{BS} in the following sense. First we consider finite extension of a local nonarchimedean field $X$, which is assumed to be a finite extension of the $p$-adic number field or a local function field such as $\mathbb{F}_p((z))$. Over $X$ there are many considerations can be made beyond the consideration in \cite{BS}. \cite{BS} actually considered certain condensed morphism in the $p$-adic consideration:
\begin{align}
\mathrm{Gal}_{X,2} \rightarrow G^\mathrm{Lan}(\overline{T})
\end{align}
where $T/\overline{\mathbb{Q}}_p$ is a corresponding $p$-adic algebraic closure, and $X$ is assumed to be $p$-adic. The first thing we want to consider is the following $z$-adic generalization:

\begin{conjecture}
For the general $X$ as in the above in the $z$-adic situation, we conjecture there exists certain \textit{Breuil-Schneider Conjecture}, namely we conjecture there exists certain condensed parametrization 
\begin{align}
\mathrm{Gal}_{X,2} \rightarrow G^\mathrm{Lan}(\overline{T})
\end{align}
for generalized $z$-adic Banach representations for $z$-adic reductive group $G(X)$.
\end{conjecture}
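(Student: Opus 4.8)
\noindent Since the statement is conjectural, we indicate the construction we expect to yield the parametrization and isolate where the difficulty concentrates. The plan is to realize the map $\mathrm{Gal}_{X,2}\to G^{\mathrm{Lan}}(\overline{T})$ as the equal-characteristic avatar of the Fargues--Scholze geometrization of local Langlands (\cite{FS}), transported into the condensed/analytic-stack setting that is the subject of this paper, so that the $\infty$-analytic stackification of the usual de Rham--to--Banach construction replaces the topological bookkeeping by genuine condensed structure. Concretely, one wants a functor from the category of generalized $z$-adic Banach representations of the $z$-adic reductive group $G(X)$ to the groupoid of condensed $G^{\mathrm{Lan}}$-valued cocycles on $\mathrm{Gal}_{X,2}$, compatible with the mixed-parity $n=2$ normalization fixed in the Assumption and recovering classical local Langlands for $X$ on the smooth locus.

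First I would present the source and the target as condensed objects: $\mathrm{Gal}_{X,2}$ (and $\mathrm{Weil}_{X,2}$) as a condensed group in the $n=2$ mixed-parity normalization, and $G^{\mathrm{Lan}}(\overline{T})$ as the $\overline{T}$-points of the relevant $L$-group, where $\overline{T}$ is the fixed $z$-adic algebraically closed coefficient field. Next, given such a Banach representation $\pi$, pass to its locally $z$-analytic and locally algebraic vectors $\pi^{\mathrm{la}}\subset\pi$ via a Schneider--Teitelbaum-type theory adapted to equal characteristic; this equips $\pi^{\mathrm{la}}$ with an action of the relevant distribution algebra and extracts both a smooth part and a ``weight'' datum. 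To the smooth part one attaches a semisimple Weil parameter via the Genestier--Lafforgue and V.~Lafforgue local Langlands correspondence over the local function field $X$ (\cite{GL}, \cite{VL}), transported to $\overline{T}$-coefficients, and one then glues this parameter to the weight datum along the equal-characteristic Fargues--Fontaine curve to produce a candidate $G^{\mathrm{Lan}}(\overline{T})$-local system on $\mathrm{Gal}_{X,2}$. Finally, one checks continuity and naturality in the condensed sense so that $\pi\mapsto(\mathrm{Gal}_{X,2}\to G^{\mathrm{Lan}}(\overline{T}))$ assembles into a morphism of analytic stacks, the excursion operators on the two sides pinning the map down up to $G^{\mathrm{Lan}}$-conjugacy.

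The hard part is the gluing/admissibility step: showing that the glued object is a genuine point --- equivalently, that $\pi$ carries a nonzero $G(X)$-invariant norm and the parameter satisfies the expected weak admissibility inequality --- is precisely the content of the Breuil--Schneider conjecture itself, and is open even in the $p$-adic case of \cite{BS} outside low rank. In equal characteristic there is the additional obstruction that the usual $p$-adic Hodge theory degenerates, so the weight comparison must be reworked directly in terms of Banach--Colmez spaces and vector bundles on the equal-characteristic curve; and the mixed-parity $n=2$ normalization forces $G^{\mathrm{Lan}}$ to be a form of a metaplectic-type cover, so the cocycle produced above a priori lives in a twisted torsor and one must verify that the corresponding obstruction class vanishes. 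A workable route is therefore to first establish the correspondence on the locus where the equal-characteristic analogue of $p$-adic local Langlands for $\mathrm{GL}_2$ is available, generalizing \cite{BS}, and then bootstrap to general $G$ by functoriality together with the $\infty$-stackification developed here.
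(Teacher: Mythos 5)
The statement you are addressing is a \emph{conjecture} in the paper, and the paper offers no proof of it; indeed the remark immediately following it states that the author does not even know the definition of ``generalized $z$-adic Banach representations'' of $G(X)$, only conjecturing that such a definition can be extracted after \cite{EGH}. So there is no argument of the paper to compare yours against, and your text should not be read as a proof either: it is a programmatic outline whose essential steps are either undefined or restatements of the conjecture itself. Concretely, your first step already operates on objects that do not yet exist --- passing to locally $z$-analytic vectors $\pi^{\mathrm{la}}\subset\pi$ presupposes both a definition of the Banach representations in question and an equal-characteristic Schneider--Teitelbaum theory of analytic vectors and distribution algebras, neither of which is available (and the degeneration of the relevant Hodge-theoretic formalism in equal characteristic, which you flag later, already obstructs this step, not merely the weight comparison). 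Likewise, the Genestier--Lafforgue/V.~Lafforgue parametrization you invoke is an $\ell$-adic ($\ell\neq p$) construction; ``transporting it to $\overline{T}$-coefficients'' where $\overline{T}$ is the $z$-adic coefficient field is not a transport but an open problem of exactly the kind the conjecture is about. Your own concluding admission makes the circularity explicit: the ``gluing/admissibility step'' you defer is, as you say, the content of the Breuil--Schneider conjecture, so the proposal establishes nothing beyond a plausible-sounding reduction of the conjecture to itself.

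Two further cautions. First, the claim that the mixed-parity $n=2$ normalization ``forces $G^{\mathrm{Lan}}$ to be a form of a metaplectic-type cover'' is an unsupported guess relative to the paper's setup, where $\mathrm{Gal}_{X,2}$ is produced by adjoining square roots of the cyclotomic character and the target $G^{\mathrm{Lan}}(\overline{T})$ is kept fixed; if you want to modify the target you must say what the modified object is and why the conjecture as stated (with target $G^{\mathrm{Lan}}(\overline{T})$) survives. Second, if you intend this as a contribution rather than a proof, the honest formulation is to present it as a refinement of the conjecture --- e.g.\ a proposed definition of the source category and a proposed compatibility with \cite{GL}, \cite{VL} on smooth vectors --- rather than as a ``proof proposal,'' since every load-bearing step (definitions, the coefficient change from $\overline{\mathbb{Q}}_\ell$ to $\overline{T}$, invariant norms/admissibility) remains open.
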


\begin{remark}
Indeed the author does \textit{not} know the definition of generalized $z$-adic Banach representations for $z$-adic reductive group $G(X)$, but we conjecture there is such a way to define this for instance after \cite{EGH}. Here we assume $T$ is finite extension of $\mathbb{Q}_p$ or $\mathbb{F}((z))$, which is sufficiently large by assumption to be able to be used to defined the coverings of Galois groups in this paper and all the corresponding roots of cyclotomic charaters, and the action of the cyclotomic characters and the Frobenius operators.
\end{remark}

\begin{conjecture}
For $n$-fold covering of Galois group $\mathrm{Gal}_{X,n}$ by taking the roots of cyclotomic charaters up to order $n$. For the general $X$ as in the above in the $z$-adic situation, we conjecture there exists certain \textit{Breuil-Schneider Conjecture}, namely we conjecture there exists certain condensed parametrization 
\begin{align}
\mathrm{Gal}_{X,n} \rightarrow G^\mathrm{Lan}(\overline{T})
\end{align}
for generalized $z$-adic Banach representations for $z$-adic reductive group $G(X)$.
\end{conjecture}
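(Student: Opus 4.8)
\noindent Since Conjecture 2 is precisely the case $n=2$ of the present statement, the plan is to obtain the general $n$-fold case from it by an induction along the tower of coverings $\cdots \to \mathrm{Gal}_{X,n} \to \mathrm{Gal}_{X,n-1} \to \cdots \to \mathrm{Gal}_{X,2} \to \mathrm{Gal}_{X}$, each step adjoining one further root of the cyclotomic character $\chi_{\mathrm{cyc}}$ up to order $n$. First I would make the group-theoretic input precise: present $\mathrm{Gal}_{X,n}$ as a central extension $1 \to A_n \to \mathrm{Gal}_{X,n} \to \mathrm{Gal}_{X} \to 1$ with $A_n$ the finite cyclic group generated by the chosen $n$-th roots of $\chi_{\mathrm{cyc}}$ (this is the mixed-parity structure flagged in Assumption 1), so that a condensed morphism $\mathrm{Gal}_{X,n} \to G^{\mathrm{Lan}}(\overline{T})$ becomes the datum of a condensed morphism $\mathrm{Gal}_{X} \to G^{\mathrm{Lan}}(\overline{T})$ together with a compatible splitting of the image over $A_n$, i.e. a lift of the structure cocycle into the center of $G^{\mathrm{Lan}}(\overline{T})$ or of a suitable metaplectic-type thickening of it. The same discussion applies verbatim to the Weil-group variant $\mathrm{Weil}_{X,n}$.

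\noindent Second, I would supply the missing definition by positing, after \cite{EGH}, a category of generalized $z$-adic Banach representations of the $z$-adic reductive group $G(X)$ over $\overline{T}$: admissible unitary $\overline{T}$-Banach representations carrying the extra $A_n$-action through a fixed central character, together with the integral lattices that make the Breuil--Schneider compatibility --- matching the weight/Hodge data on the Galois side with the locally algebraic vectors on the automorphic side --- meaningful in the $z$-adic setting. Granting Conjecture 2 for $\mathrm{Gal}_{X,2}$, I would then pull the resulting parametrization back along $\mathrm{Gal}_{X,n} \to \mathrm{Gal}_{X,2}$ and twist by the continuous character recording the remaining roots of $\chi_{\mathrm{cyc}}$; this produces a candidate $\mathrm{Gal}_{X,n} \to G^{\mathrm{Lan}}(\overline{T})$, and one checks that the twist is again condensed (a formal verification, since it is assembled from continuous characters) and still lands inside the locus carved out by the Breuil--Schneider inequality.

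\noindent Third, to organize the compatibilities and the condensation uniformly across $n$, I would route everything through the $\infty$-stackification developed in the earlier parts of this series: realize the source as the $\infty$-analytic stack of $\mathrm{Gal}_{X,n}$-representations (an Emerton--Gee-type moduli stack, base-changed to the $z$-adic coefficients and $\infty$-categorified), realize the target as the $\infty$-analytic stack of generalized $z$-adic Banach representations of $G(X)$, and produce the parametrization as a morphism of $\infty$-analytic stacks whose evaluation on $\overline{T}$-points recovers the desired condensed map. The tower then upgrades to a compatible system of such stack morphisms indexed by $n$, which is the form in which the later sections will want it.

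\noindent The hard part --- and the reason the statement must stay conjectural --- is twofold. One genuinely has to \emph{construct} a workable notion of generalized $z$-adic Banach representation of $G(X)$ and prove that the Breuil--Schneider dictionary (existence of invariant lattices, the comparison of locally algebraic and locally analytic vectors with Hodge-theoretic invariants) survives both the passage from $\mathbb{Q}_p$-coefficients to a general $z$-adic $T$ and the mixed-parity thickening of the Galois group; there is as yet no $z$-adic replacement for the $p$-adic Hodge theory underlying \cite{BS}. And the $\infty$-stackification step needs the relevant moduli to be representable by $\infty$-analytic stacks over the $z$-adic base with condensation exact along the comparison --- precisely the analytic-geometric input the rest of the paper is devoted to --- while the metaplectic central extension may obstruct the lift unless $G^{\mathrm{Lan}}$ is correspondingly enlarged.
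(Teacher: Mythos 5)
This statement is one of the paper's \emph{conjectures}; the paper offers no proof of it anywhere (indeed Remark~1 explicitly concedes that the author does not even know how to define ``generalized $z$-adic Banach representations'' of $G(X)$, the very objects the parametrization is supposed to classify), so there is no argument of the paper's to compare yours against. Judged on its own terms, your proposal is not a proof either, and you partly say so yourself: every load-bearing step is conditional on something that is itself open. You assume Conjecture~2 (the $n=2$ case), which is unproven; you assume a workable category of generalized $z$-adic Banach representations together with a $z$-adic replacement for the $p$-adic Hodge theory underlying Breuil--Schneider, which you acknowledge does not exist; and you assume the representability and descent statements for the $\infty$-analytic moduli stacks. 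A chain of reductions from one open statement to several other open statements establishes nothing, so the conclusion ``there exists a condensed parametrization'' is never reached.

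Beyond the conditionality, the central reduction step is itself doubtful. There is no surjection $\mathrm{Gal}_{X,n}\to\mathrm{Gal}_{X,2}$ unless $2\mid n$, so ``pull back along $\mathrm{Gal}_{X,n}\to\mathrm{Gal}_{X,2}$ and twist'' does not even make sense for general $n$; and even when it does, the automorphic side of the conjecture changes with $n$ (the mixed-parity representation category for order-$n$ roots of $\chi_{\mathrm{cyc}}$ is not the $n=2$ category), so a character twist of the $n=2$ parametrization cannot be expected to be the asserted parametrization for level $n$ --- the map on the Galois side would change while the representation-theoretic target it is supposed to match would not. Your description of $\mathrm{Gal}_{X,n}$ as a central extension of $\mathrm{Gal}_X$ by a finite cyclic group is likewise an unargued structural guess that the paper nowhere supplies. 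The honest assessment is that the statement is, and in the paper remains, a conjecture: your text is a reasonable research program (and its third paragraph is broadly consonant with the stackification machinery the paper develops for the $\ell$-adic analogue, Theorem~5), but it should not be presented as a proof.
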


\indent When we have $\ell$-adic coefficient actually we also have certain conjectures in some obvious sense:

\begin{conjecture}
There is a way to generalize \cite{FS} to our setting, namely for any $n$-fold covering of Galois group $\mathrm{Gal}_{X,n}$ by taking the roots of cyclotomic charaters up to order $n$, we have certain condensed parametrization:
\begin{align}
\mathrm{Gal}_{X,n} \rightarrow G^\mathrm{dual}(\overline{\mathbb{Q}}_\ell)\times \mathrm{Weil}_X
\end{align}
for generalized $\ell$-adic perverse sheaves attached to $G(X)$. One can also replace the full Galois group with the corresponding Weil group version. For any $n$-fold covering of Galois group $\mathrm{Gal}_{X,n}$ by taking the roots of cyclotomic charaters up to order $n$, we have certain condensed parametrization:
\begin{align}
\mathrm{Weil}_{X,n} \rightarrow G^\mathrm{dual}(\overline{\mathbb{Q}}_\ell)\times \mathrm{Weil}_X
\end{align}
for generalized $\ell$-adic perverse sheaves attached to $G(X)$. 
\end{conjecture}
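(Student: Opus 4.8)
\noindent The plan is to follow the Fargues--Scholze geometrization strategy of \cite{FS}, now adapted to the $z$-adic and mixed-parity setting, performing the requisite $\infty$-categorical and condensed bookkeeping along the way. First I would construct the relevant geometric object: the small v-stack $\mathrm{Bun}_{G,n}$ parametrizing $G$-bundles on the Fargues--Fontaine curve attached to $X$, base-changed along the pro-\'etale cover of $\mathrm{Spa}\,X$ obtained by adjoining the roots of the cyclotomic character up to order $n$, this cover being exactly what produces $\mathrm{Gal}_{X,n}$ and $\mathrm{Weil}_{X,n}$. Following \cite{SchA}, \cite{SchB}, \cite{FS} one checks that $\mathrm{Bun}_{G,n}$ is cohomologically smooth and admits the expected Harder--Narasimhan stratification, with strata indexed by $B(G)$ twisted by the covering datum, and that the open locus of semistable bundles remains well behaved.

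Next I would install the sheaf theory at the level of $\infty$-categories: the stable $\infty$-category $D_{\mathrm{lis}}(\mathrm{Bun}_{G,n},\overline{\mathbb{Q}}_\ell)$ of solid $\ell$-adic sheaves together with its perverse $t$-structure, built using the $\infty$-analytic stackification of \cite{CSA}, \cite{CSB}, \cite{CSC} and the condensed formalism of \cite{KLA}, \cite{KLB}. Here ``generalized $\ell$-adic perverse sheaves attached to $G(X)$'' is to be read as the heart of this $t$-structure, or, more robustly, as the subcategory generated under the six operations by the images of compact objects along the geometric Eisenstein and constant-term functors. With geometric Satake for the $B_{\mathrm{dR}}^+$-affine Grassmannian in hand (from \cite{FS}), one builds the Hecke action and then the excursion operators: for each finite set $I$, each algebraic representation $V$ of $(G^{\mathrm{dual}}\times\mathrm{Weil}_{X,n})^I$, each pair of maps $\mathbf{1}\rightarrow V$ and $V\rightarrow\mathbf{1}$ after diagonal restriction, and each $I$-tuple of elements of $\mathrm{Weil}_{X,n}$, one obtains an endomorphism of the identity functor on $D_{\mathrm{lis}}(\mathrm{Bun}_{G,n},\overline{\mathbb{Q}}_\ell)$. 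The standard compatibilities assemble these into an algebra homomorphism from the spectral Bernstein center $\mathcal{O}(Z^1(\mathrm{Weil}_{X,n},G^{\mathrm{dual}}))^{G^{\mathrm{dual}}}$ to the Bernstein center of $G(X)$, which, evaluated on an irreducible smooth representation, yields the desired condensed parametrization $\mathrm{Gal}_{X,n}\rightarrow G^{\mathrm{dual}}(\overline{\mathbb{Q}}_\ell)\times\mathrm{Weil}_X$. The Weil-group statement then follows by restriction along $\mathrm{Weil}_{X,n}\hookrightarrow\mathrm{Gal}_{X,n}$, since the cohomology of the moduli of local shtukas only ever carries a continuous $\mathrm{Weil}_{X,n}$-action.

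The hard part will be twofold. First, as the preceding remark already flags, one must genuinely \emph{define} the target --- generalized $\ell$-adic perverse sheaves for the $z$-adic reductive group $G(X)$ --- in a way that is stable under the six operations and compatible with geometric Satake and the Hecke action; this is precisely where the $\infty$-stackification of \cite{CSA}, \cite{CSB}, \cite{CSC} is indispensable, and where the mixed-parity parameter $n$ forces input beyond \cite{BS} and \cite{FS}. Second, one must verify that the $n$-fold cyclotomic covering is compatible with cohomological smoothness and with the K\"unneth and base-change properties that the excursion machinery consumes; I expect the $z$-adic (equal-characteristic) case, as opposed to the purely $p$-adic case of \cite{BS}, to be where the subtleties concentrate, since the Fargues--Fontaine curve and its $\ell$-adic cohomology behave differently there, so the finiteness results of \cite{KPX} and the descent statements of \cite{KLA}, \cite{KLB} would need re-examination over the covered base before the argument closes.
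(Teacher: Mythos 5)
Your overall skeleton is the same as the paper's: the paper also proves this (in its theorem form, with the extra ``$\mathrm{End}$ is a line'' condition) by placing the category of condensed $\overline{\mathbb{Q}}_\ell$-perverse sheaves on $\mathrm{Stack}_{\mathrm{Bun},G,X,n}$ into the $\mathbb{Z}_\ell$-linear categorical formalism of \cite{FS} and \cite{VL}, with a Hecke action and, in its alternative argument, a stack of shtukas with one leg pulled back along $\mathrm{Stack}_{\mathrm{Bun},G,X,n}\rightarrow\mathrm{Stack}_{\mathrm{Bun},G,X,1}$ whose pushforward realizes the excursion operators and the map to the Bernstein center. Your Hecke/excursion/spectral-Bernstein-center paragraph is essentially that argument.

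The genuine gap is in how you make the $n$-fold covering enter the geometry. You propose to base-change $\mathrm{Bun}_G$ along ``the pro-\'etale cover of $\mathrm{Spa}\,X$ obtained by adjoining the roots of the cyclotomic character,'' and assert that this cover ``produces $\mathrm{Gal}_{X,n}$ and $\mathrm{Weil}_{X,n}$.'' It does not: a pro-\'etale cover of $\mathrm{Spa}\,X$ corresponds to passing to a closed subgroup (or quotient by a closed subgroup) of $\mathrm{Gal}_X$, whereas $\mathrm{Gal}_{X,n}$ and $\mathrm{Weil}_{X,n}$ are $n$-fold \emph{coverings} (extensions) of $\mathrm{Gal}_X$ and $\mathrm{Weil}_X$ on which an $n$-th root of $\chi_{\mathrm{cyc}}$ exists; no field extension of $X$ creates such a root. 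The paper instead builds the mixed-parity structure into the curve itself, defining $\mathrm{FarFon}_{X,n}$ by adjoining $t^{1/n}$ to the Robba rings $\overline{\Pi}_{\square,X,I}$, with the covering group acting on the chosen $t^{1/n}$ through a root of $\chi_{\mathrm{cyc}}$; the $\mathrm{Weil}_{X,n}$-equivariance of the sheaf theory is then realized via the second leg of the Hecke correspondence, namely the mixed-parity Cartier divisor stack $\mathrm{Stack}_{\mathrm{Cartier},G,n}$, which maps (by sending $t^{1/n}$ to its equivalence class) to the classifying stack of the $n$-fold covering of the Weil group, and pullback along this map is what lands the Hecke eigen-structure in the $\mathrm{Weil}_{X,n}$-equivariant subcategory. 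Without some such device, your excursion operators are only ever indexed by elements of $\mathrm{Weil}_X$ (or a subgroup of it), and the construction cannot output parameters on $\mathrm{Gal}_{X,n}$ or $\mathrm{Weil}_{X,n}$ as the statement requires; so you should replace the base-change-of-$\mathrm{Spa}\,X$ step by the $t^{1/n}$-cover of the curve together with the Cartier-divisor (or shtuka-leg) mechanism for the covering-group equivariance. The rest of your proposal --- cohomological smoothness, $D_{\mathrm{lis}}$ via \cite{CSA}, \cite{CSB}, \cite{CSC}, geometric Satake, and the caveats about the $z$-adic case --- is consistent with what the paper does or assumes.
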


\noindent After \cite{EGH} we conjecture that there is also certain way to generalize the rigid analytic stack from \cite{EGH} to our setting. First we replace Breuil-Schneider consideration with the Robba ring version of the consideration. 

\begin{conjecture}
For the general $X$ as in the above in the $z$-adic situation, we conjecture there exists certain \textit{Breuil-Schneider Conjecture}, namely we conjecture there exists certain condensed parametrization of $(\varphi,\Gamma_n)$-$G^\mathrm{Lan}(\overline{T})$-bundle over the generalized Robba ring $\Pi_X[\log(1+T)^{1/n}]$ in the imperfect setting, for generalized $p$-adic Banach representations for $z$-adic reductive group $G(X)$ and more. For the general $X$ as in the above in the $p$-adic situation, we conjecture there exists certain \textit{Breuil-Schneider Conjecture}, namely we conjecture there exists certain condensed parametrization of $(\varphi,\Gamma_n)$-$G^\mathrm{Lan}(\overline{T})$-bundle over the generalized Robba ring $\Pi_X[\log(1+T)^{1/n}]$ in the imperfect setting, for generalized $p$-adic Banach representations for $p$-adic reductive group $G(X)$ and more.
\end{conjecture}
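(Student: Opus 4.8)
To establish the conjectured parametrization, the plan is to proceed in three stages — building the coefficient ring, building a Tannakian dictionary on the Galois side, and building the functor out of representations of $G(X)$ — and then to $\infty$-analytically stackify the whole picture. First I would fix $\Pi_X$ to be the Robba ring of $X$ in the overconvergent (imperfect) model rather than the perfectoid Fargues–Fontaine one, adjoin a compatible system of $n$-th roots of $t = \log(1+T)$ to form $\Pi_X[\log(1+T)^{1/n}]$, and check that $\varphi$ together with the action of the full cyclotomic $\Gamma$ extends to an action of the $n$-fold cover $\Gamma_n$ through the roots of the cyclotomic character of order $n$ fixed in the Assumption; this is where the parameter $n$ enters, and it is what matches the $n$-fold Galois cover $\mathrm{Gal}_{X,n}$. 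In the $z$-adic case one runs the equal-characteristic analogue using the function-field Robba ring; in the $p$-adic case this is closest to \cite{EGH} and \cite{BS}.

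Second, I would set up the Tannakian reduction: a $(\varphi,\Gamma_n)$-$G^{\mathrm{Lan}}(\overline{T})$-bundle over $\Pi_X[\log(1+T)^{1/n}]$ is an exact faithful tensor functor from $\mathrm{Rep}\, G^{\mathrm{Lan}}(\overline{T})$ to the category of $(\varphi,\Gamma_n)$-modules over that ring, and since $\overline{T}$ is a characteristic-zero field such a functor is automatically representable by a $G^{\mathrm{Lan}}$-torsor. On the $\mathrm{GL}_d$ level the equivalence with continuous $d$-dimensional representations of $\mathrm{Gal}_{X,n}$ is Cherbonnier–Colmez overconvergence together with Fontaine's equivalence, upgraded to the imperfect $n$-ramified Robba ring by Kedlaya–Liu style descent; composing with the Tannakian reconstruction then produces the desired map $\mathrm{Gal}_{X,n} \rightarrow G^{\mathrm{Lan}}(\overline{T})$, and the Weil-group variant by restriction. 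The condensed structure is carried along for free because the period rings and the representation spaces are naturally solid condensed objects, so every arrow is a morphism in the condensed world by construction.

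Third, for the input from representations of $G(X)$ I would follow \cite{EGH}: replace their rigid analytic moduli stack of $(\varphi,\Gamma)$-modules by the analytic stack of $(\varphi,\Gamma_n)$-$G^{\mathrm{Lan}}(\overline{T})$-bundles over the generalized Robba ring, and use the Breuil–Schneider/Emerton locally analytic recipe to attach to a generalized $p$-adic Banach representation of $G(X)$ a family of such bundles. This gives a morphism of stacks whose fibrewise effect is the sought parametrization, and the $\infty$-analytic stackification that is the subject of this paper is precisely what upgrades the target to a genuine analytic $\infty$-stack, so that the phrase "there exists a condensed parametrization" acquires its intended meaning.

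The hard part will be twofold. First, one must make precise the notion of \emph{generalized $p$-adic Banach representation} of the $z$-adic (resp. $p$-adic) reductive group $G(X)$ attached to the $n$-fold cyclotomic cover — the Remark above already flags that this definition is not yet in hand, so the argument genuinely rests on supplying it, presumably as a covering analogue of the admissible Banach representations used in \cite{EGH}. Second, the overconvergence and full faithfulness of $(\varphi,\Gamma_n)$-modules over the \emph{imperfect} generalized Robba ring $\Pi_X[\log(1+T)^{1/n}]$ are not formal consequences of the perfectoid theory; proving them for the ramified tower, uniformly in $n$, with the $G^{\mathrm{Lan}}$-Tannakian refinement and with control of the condensed (solid) topology, is the real obstacle and the point where genuinely new input beyond \cite{BS}, \cite{FS}, and \cite{EGH} is required.
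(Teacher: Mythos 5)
Your text is a research program, not a proof, and it should be compared against the fact that the paper itself offers no proof of this statement: it is stated (twice) as a conjecture, and the paper's contribution around it is only to \emph{define} the stacks $\mathrm{Stack}_{(\varphi,\Gamma_n), G^\mathrm{Lan}(\overline{T}),\Pi_X[\log(1+T)^{1/n}]}$ and their condensed coherent sheaves, and separately to prove theorems about $\mathrm{Stack}_{\mathrm{Bun},G,X,n}$ and the $\ell$-adic parametrization; the $p$-adic/$z$-adic Breuil--Schneider parametrization itself is left open, with the geometrized version appearing only as the conjectural solid tensor functor $\otimes^\blacksquare$ on prismatic quasi-coherent sheaves. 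The two items you yourself flag as ``the hard part'' --- (i) defining generalized Banach representations of $G(X)$ attached to the $n$-fold cover, which the paper's own Remark admits is not known even in outline for the $z$-adic group, and (ii) an overconvergence/full-faithfulness theorem of Cherbonnier--Colmez/Kedlaya--Liu type over the imperfect ring $\Pi_X[\log(1+T)^{1/n}]$ with $\Gamma_n$-action --- are not technical loose ends to be deferred: they \emph{are} the mathematical content of the conjecture, so deferring them means nothing has been proved.

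Two specific steps in your sketch would also not go through as stated. First, the Tannakian claim that an exact faithful tensor functor from $\mathrm{Rep}\,G^{\mathrm{Lan}}(\overline{T})$ to $(\varphi,\Gamma_n)$-modules over $\Pi_X[\log(1+T)^{1/n}]$ is ``automatically'' a $G^{\mathrm{Lan}}$-torsor is unjustified: the target is not a neutral Tannakian category over $\overline{T}$, and torsor-ness requires a local triviality or descent statement over the Robba ring (this is exactly the kind of input \cite{FS} and \cite{KLA}, \cite{KLB} have to work for on the perfectoid side, and it is unavailable off the shelf in the imperfect, $n$-ramified, condensed setting). Second, the $z$-adic half of the conjecture cannot be run as ``the equal-characteristic analogue'' of the cyclotomic story: for $X$ of equal characteristic there is no cyclotomic tower, no $\log(1+T)$, and no Cherbonnier--Colmez theorem to upgrade, and the representation-theoretic input ($z$-adic Banach representations of $G(X)$) is precisely what the paper says does not yet exist; so your first and second stages are vacuous there. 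Finally, the appeal to \cite{EGH} in your third stage inverts the logical order: in \cite{EGH} the passage from Banach/locally analytic representations to families on the Galois-side stack is itself the categorical conjecture, not an available recipe, so using it ``fibrewise'' assumes a statement at least as strong as the one to be proved.
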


\begin{remark}
Here we assume $T$ is finite extension of $\mathbb{Q}_p$ or $\mathbb{F}((z))$, which is sufficiently large by assumption to be able to be used to defined the coverings of Galois groups in this paper and all the corresponding roots of cyclotomic charaters, and the action of the cyclotomic characters and the Frobenius operators.
\end{remark}

\indent In this paper we also consider the corresponding geometrization of Breuil-Schneider's conjecture in \cite{BS}, by using the prismatization, after \cite{CSA}, \cite{CSB}, \cite{CSC}, \cite{SchD}, \cite{SALBRC}, \cite{TA}, \cite{TB}, \cite{TC}, \cite{BBBK}, \cite{BS2}, \cite{BLA}, \cite{DC}. Essentially we needs the \textit{condensation}, which might provide directly the Banach norms in \cite{BS} automatically.

\begin{conjecture}
The Breuil-Schneider style functor 
\begin{align}
&\otimes^\blacksquare: \mathrm{QuasiCohSheaves}^\blacksquare_{\mathrm{prismatic},\mathrm{Stack}_{\mathrm{prismatic}}}(\mathrm{Stack}_{\mathrm{Bun},G,X,n})_{\mathrm{End}}\\
&\times \mathrm{QuasiCohSheaves}^\blacksquare_{\mathrm{prismatic},\mathrm{Stack}_{\mathrm{prismatic}}}(\mathrm{Stack}_{\mathrm{Bun},G,X,n})_{\mathrm{Loc}}\\
& \rightarrow \mathrm{QuasiCohSheaves}^\blacksquare_{\mathrm{prismatic},\mathrm{Stack}_{\mathrm{prismatic}}}(\mathrm{Stack}_{\mathrm{Bun},G,X,n})
\end{align}
can be used to construct geometrization of the Breuil-Schneider paramatrization in \cite{BS}.
\end{conjecture}

\subsection{Generalized Langlands Parameter Stackification}

\indent After \cite{EGH} we immediately have the following stacks:

\begin{definition}
$\mathrm{Stack}_{(\varphi,\Gamma_n), G^\mathrm{Lan}(\overline{T}),\Pi_X[\log(1+T)^{1/n}]}$ are stacks over rigid analytic spaces $\mathrm{RAS}_X$, parametrizing arithmetically all the $(\varphi,\Gamma_n)$-$G^\mathrm{Lan}(\overline{T})$-bundles over the generalized Robba ring $\Pi_X[\log(1+T)^{1/n}]$ in the imperfect setting.
\end{definition}

\indent Then one can follow \cite{FS}, \cite{DHKM}, \cite{EGH}, \cite{Z} to consider the corresponding coherent sheaves over these stacks by using the points coming from rigid analytic spaces, even in the derived sense by animating the corresponding rigid analytic affinoids from \cite{CSA}, \cite{CSB}, \cite{CSC}, where we use the same notation to denote the corresponding $\infty$-stacks.

\begin{definition}
$\mathrm{Stack}^\blacksquare_{(\varphi,\Gamma_n), G^\mathrm{Lan}(\overline{T}),\Pi_X[\log(1+T)^{1/n}]}$ are $\infty$-stacks over derived rigid analytic spaces $\mathrm{RAS}^\blacksquare_X$, parametrizing arithmetically all the $(\varphi,\Gamma_n)$-$G^\mathrm{Lan}(\overline{T})$-bundles over the generalized Robba ring 
\begin{align}
\Pi_{X,\blacksquare}[\log(1+T)^{1/n}]
\end{align}
in the imperfect setting. We then have the corresponding coherent sheaves over these $\infty$-stacks, where we use the notation $\mathrm{Coh}_\blacksquare(\mathrm{Stack}^\blacksquare_{(\varphi,\Gamma_n), G^\mathrm{Lan}(\overline{T}),\Pi_X[\log(1+T)^{1/n}]})$ to denote the corresponding condesed coherent sheaves.
\end{definition}

\subsection{Results}

\indent We introduce now the results of this paper. We generalize the corresponding context of \cite{FS} to certain generalized context where extension of Galois actions are introduced. The corresponding functorial deformation of Robba sheaves give rise to certain analytic stackification of the corresponding filtred Hodge structures in the corresponding prismatic/crystalline situations. We then have the chance to construct the corresponding parametrization through stackification of the generalized $\ell$-adic perverse sheaves generalizing the smooth representations of reductive nonarchimedean groups. The coefficient can be $p$-adic as well, where one can use the corresponding prismatization approach following \cite{CSA}, \cite{CSB}, \cite{CSC}, \cite{SchD}, \cite{SALBRC}, \cite{TA}, \cite{TB}, \cite{TC}, \cite{BBBK}, \cite{BS2}, \cite{BLA}, \cite{DC}. This can be use to geometrize Breuil-Schneider's original conjecture in \cite{BS}.

\begin{theorem}
$\mathrm{Stack}_{\mathrm{Bun},G,X,n}(\square)$ for $n\geq 2$ are actually equal to there stackification over the $v$-sites. And they satisfy the condition of being of smallness. Finally we can prove that they are satisfying the condition of being Artin as in \cite{FS}.
\end{theorem}

\begin{theorem}
\begin{align}
&\mathrm{FilFibCat}_{\mathrm{crys},n}(\square),\\
&\mathrm{FilFibCat}_{\mathrm{crys},n,\mathrm{Frob}}(\square),
\end{align} 
are stacks, with certain morphisms into:
\begin{align}
\mathrm{Stack}_{\mathrm{Bun},G,X,n}.
\end{align}
\end{theorem}

\begin{theorem}
Over the stack $\mathrm{Stack}_{\mathrm{Bun},G,X,n}$ we have the corresponding $v$-stacks of categories of prismatic quasi-coherent sheaves:
\begin{align}
\mathrm{QuasiCohSheaves}^\blacksquare_{\mathrm{prismatic},\mathrm{Stack}_{\mathrm{prismatic}}}(\mathrm{Stack}_{\mathrm{Bun},G,X,n}).
\end{align}
These quasi-coherent sheaves can be used to define the $p$-adic motives over category $\mathrm{Stack}_{\mathrm{Bun},G,X,n}$. Moreover we have the well-defined category of all the objects have the requirement $\mathrm{End}$ is a line:
\begin{align}
\mathrm{QuasiCohSheaves}^\blacksquare_{\mathrm{prismatic},\mathrm{Stack}_{\mathrm{prismatic}}}(\mathrm{Stack}_{\mathrm{Bun},G,X,n}).
\end{align}
They cover the objects related to lisse $p$-adic representations of the reductive groups.
\end{theorem}

\begin{theorem}
Over the stack $\mathrm{Stack}_{\mathrm{Bun},G,X,n}$ we have the corresponding $v$-stacks of categories of prismatic quasi-coherent sheaves:
\begin{align}
\mathrm{QuasiCohSheaves}^\blacksquare_{\mathrm{prismatic},\mathrm{Stack}_{\mathrm{prismatic},\mathrm{deRham}}}(\mathrm{Stack}_{\mathrm{Bun},G,X,n}).
\end{align}
These quasi-coherent sheaves can be used to define the $p$-adic motives over category $\mathrm{Stack}_{\mathrm{Bun},G,X,n}$. Moreover we have the well-defined category of all the objects have the requirement $\mathrm{End}$ is a line:
\begin{align}
\mathrm{QuasiCohSheaves}^\blacksquare_{\mathrm{prismatic},\mathrm{Stack}_{\mathrm{prismatic}}}(\mathrm{Stack}_{\mathrm{Bun},G,X,n}).
\end{align}
They cover the objects related to lisse $p$-adic representations of the reductive groups.
\end{theorem}

\begin{theorem}
There is a way to generalize \cite{FS} to our setting, namely for any $n$-fold covering of Galois group $\mathrm{Gal}_{X,n}$ by taking the roots of cyclotomic charaters up to order $n$, we have certain condensed parametrization:
\begin{align}
\mathrm{Gal}_{X,n} \rightarrow G^\mathrm{dual}(\overline{\mathbb{Q}}_\ell)\times \mathrm{Weil}_X
\end{align}
for generalized $\ell$-adic perverse sheaves attached to $G(X)$, which make $\mathrm{End}$ a line. One can also replace the full Galois group with the corresponding Weil group version. For any $n$-fold covering of Galois group $\mathrm{Gal}_{X,n}$ by taking the roots of cyclotomic charaters up to order $n$, we have certain condensed parametrization:
\begin{align}
\mathrm{Weil}_{X,n} \rightarrow G^\mathrm{dual}(\overline{\mathbb{Q}}_\ell)\times \mathrm{Weil}_X
\end{align}
for generalized $\ell$-adic perverse sheaves attached to $G(X)$, which make $\mathrm{End}$ a line. 
\end{theorem}

\begin{remark}
Here the corresponding de Rham stack over any perfectoid $\square$ is defined to be the formal spectrum of the corresponding de Rham period sheaf attached to an untilt $\sharp\square$ of $\square$, since we are working in an absolute situation where we do not fix some prism as the base ring. To be more precise we use the corresponding stack 
\begin{align}
\mathrm{Specformal}\Pi_\mathrm{deRham}(\sharp\square^\flat)=\mathrm{Specformal}\mathrm{WittVector}_\mathcal{O}(\sharp\square^\flat)[1/p]_{I_{\sharp\square}}. 
\end{align}
Then we take the corresponding \textit{condensed analytification}. If we have a fixed prism as the base ring then we can just use the Cartier divisor for this ring to take the completion with respect to this divisor after inverting $p$. In this manner is $X$ is $z$-adic we can also use the corresponding $\mathrm{WittVector}_\mathcal{O}(\square)$ to define the corresponding $z$-adic de Rham period ring which gives rise to $z$-adic \textit{prismatization} after \cite{BS2}, \cite{BLA}, \cite{DC}. However here we only use $p$-adic prismatization.
\end{remark}

\newpage

\section{Stackification}

\noindent In this section we prove the following results:

\begin{theorem}
$\mathrm{Stack}_{\mathrm{Bun},G,X,n}(\square)$ for $n\geq 2$ are actually equal to there stackification over the $v$-sites. And they satisfy the condition of being of smallness. Finally we can prove that they are satisfying the condition of being Artin as in \cite{FS}.
\end{theorem}

\begin{theorem}
\begin{align}
&\mathrm{FilFibCat}_{\mathrm{crys},n}(\square),\\
&\mathrm{FilFibCat}_{\mathrm{crys},n,\mathrm{Frob}}(\square),
\end{align} 
are stacks, with certain morphisms into:
\begin{align}
\mathrm{Stack}_{\mathrm{Bun},G,X,n}.
\end{align}
\end{theorem}

\begin{theorem}
Over the stack $\mathrm{Stack}_{\mathrm{Bun},G,X,n}$ we have the corresponding $v$-stacks of categories of prismatic quasi-coherent sheaves:
\begin{align}
\mathrm{QuasiCohSheaves}^\blacksquare_{\mathrm{prismatic},\mathrm{Stack}_{\mathrm{prismatic}}}(\mathrm{Stack}_{\mathrm{Bun},G,X,n}).
\end{align}
These quasi-coherent sheaves can be used to define the $p$-adic motives over category $\mathrm{Stack}_{\mathrm{Bun},G,X,n}$. Moreover we have the well-defined category of all the objects have the requirement $\mathrm{End}$ is a line:
\begin{align}
\mathrm{QuasiCohSheaves}^\blacksquare_{\mathrm{prismatic},\mathrm{Stack}_{\mathrm{prismatic}}}(\mathrm{Stack}_{\mathrm{Bun},G,X,n}).
\end{align}
They cover the objects related to lisse $p$-adic representations of the reductive groups.
\end{theorem}

\begin{theorem}
Over the stack $\mathrm{Stack}_{\mathrm{Bun},G,X,n}$ we have the corresponding $v$-stacks of categories of prismatic quasi-coherent sheaves:
\begin{align}
\mathrm{QuasiCohSheaves}^\blacksquare_{\mathrm{prismatic},\mathrm{Stack}_{\mathrm{prismatic},\mathrm{deRham}}}(\mathrm{Stack}_{\mathrm{Bun},G,X,n}).
\end{align}
These quasi-coherent sheaves can be used to define the $p$-adic motives over category $\mathrm{Stack}_{\mathrm{Bun},G,X,n}$. Moreover we have the well-defined category of all the objects have the requirement $\mathrm{End}$ is a line:
\begin{align}
\mathrm{QuasiCohSheaves}^\blacksquare_{\mathrm{prismatic},\mathrm{Stack}_{\mathrm{prismatic}}}(\mathrm{Stack}_{\mathrm{Bun},G,X,n}).
\end{align}
They cover the objects related to lisse $p$-adic representations of the reductive groups.
\end{theorem}

\begin{theorem}
There is a way to generalize \cite{FS} to our setting, namely for any $n$-fold covering of Galois group $\mathrm{Gal}_{X,n}$ by taking the roots of cyclotomic charaters up to order $n$, we have certain condensed parametrization:
\begin{align}
\mathrm{Gal}_{X,n} \rightarrow G^\mathrm{dual}(\overline{\mathbb{Q}}_\ell)\times \mathrm{Weil}_X
\end{align}
for generalized $\ell$-adic perverse sheaves attached to $G(X)$, which make $\mathrm{End}$ a line. One can also replace the full Galois group with the corresponding Weil group version. For any $n$-fold covering of Galois group $\mathrm{Gal}_{X,n}$ by taking the roots of cyclotomic charaters up to order $n$, we have certain condensed parametrization:
\begin{align}
\mathrm{Weil}_{X,n} \rightarrow G^\mathrm{dual}(\overline{\mathbb{Q}}_\ell)\times \mathrm{Weil}_X
\end{align}
for generalized $\ell$-adic perverse sheaves attached to $G(X)$, which make $\mathrm{End}$ a line. 
\end{theorem}

\subsection{Stackification for Generalized Hodge Modules}

\noindent Motivated by the conjectures in the section above, we consider the corresponding algebraic geometric approaches by using sophisticated stacks in the following.

\begin{definition}
$\mathrm{Stack}_{(\varphi,\Gamma_n), G^\mathrm{Lan}(\overline{T}),\Pi_X[\log(1+T)^{1/n}]}$ are stacks over rigid analytic spaces $\mathrm{RAS}_X$, parametrizing arithmetically all the $(\varphi,\Gamma_n)$-$G^\mathrm{Lan}(\overline{T})$-bundles over the generalized Robba ring $\Pi_X[\log(1+T)^{1/n}]$ in the imperfect setting.
\end{definition}

\indent Then one can follow \cite{FS}, \cite{DHKM}, \cite{EGH}, \cite{Z} to consider the corresponding coherent sheaves over these stacks by using the points coming from rigid analytic spaces, even in the derived sense by animating the corresponding rigid analytic affinoids from \cite{CSA}, \cite{CSB}, \cite{CSC}, where we use the same notation to denote the corresponding $\infty$-stacks.

\begin{definition}
$\mathrm{Stack}^\blacksquare_{(\varphi,\Gamma_n), G^\mathrm{Lan}(\overline{T}),\Pi_X[\log(1+T)^{1/n}]}$ are $\infty$-stacks over derived rigid analytic spaces $\mathrm{RAS}^\blacksquare_X$, parametrizing arithmetically all the $(\varphi,\Gamma_n)$-$G^\mathrm{Lan}(\overline{T})$-bundles over the generalized Robba ring
 \begin{align}
 \Pi_{X,\blacksquare}[\log(1+T)^{1/n}]
 \end{align}
 in the imperfect setting. We then have the corresponding coherent sheaves over these $\infty$-stacks, where we use the notation $\mathrm{Coh}_\blacksquare(\mathrm{Stack}^\blacksquare_{(\varphi,\Gamma_n), G^\mathrm{Lan}(\overline{T}),\Pi_X[\log(1+T)^{1/n}]})$ to denote the corresponding condesed coherent sheaves.
\end{definition}

There $\infty$-stacks are actually playing the roll of $\infty$-stacks in \cite{FS}, \cite{DHKM}, \cite{Z} in the $\ell$-setting, generalizing from \cite{EGH} to our mixed-parity situation. Namely they should be called  \textit{arithmetic stacks of $p$-adic Langlands Parametrization}.
 
\begin{definition}
$\mathrm{Stack}^\blacksquare_{(\varphi,\Gamma_n), G^\mathrm{Lan}(\overline{T}),\Pi_X[\log(1+T)^{1/n}]}$ are $\infty$-stacks over derived rigid analytic spaces $\mathrm{RAS}^\blacksquare_X$, parametrizing arithmetically all the $(\varphi,\Gamma_n)$-$G^\mathrm{Lan}(\overline{T})$-bundles over the generalized Robba ring 
\begin{align}
\Pi_{X,\blacksquare}[\log(1+T)^{1/n}]
\end{align}
in the imperfect setting. We then have the corresponding coherent sheaves over these $\infty$-stacks, where we use the notation $\mathrm{QuasiCoh}_\blacksquare(\mathrm{Stack}^\blacksquare_{(\varphi,\Gamma_n), G^\mathrm{Lan}(\overline{T}),\Pi_X[\log(1+T)^{1/n}]})$ to denote the corresponding condesed quasicoherent sheaves.
\end{definition}

\indent The program in \cite{FS} actually can be generalized to the mixed-parity situation in this paper in the following way, to use the corresponding stacks of \textit{geometric} family of relative Hodge structure in the $p$-adic and $z$-adic fashion.

\begin{definition}
We use the notation $\mathrm{TPERF}_{\mathrm{Spd}e}$ to denote all the Tate perfectoid spaces over $\mathrm{Spd}e$ as in \cite{FS}. $e/\mathbb{F}_p$ is assumed to be algebraically closure of $\mathbb{F}_p$. Then we consider the Fargues-Fontaine diamonds over this site carrying $v$-topology in the mixed-parity generalization situation. We then use the notation $\mathrm{FarFon}_{X,n}(\square)$ to denote the functors, where $\square$ is varying in the site $\mathrm{TPERF}_{\mathrm{Spd}e}$. To be more precise:
\begin{align}
\mathrm{FarFon}_{X,n}(\square) = \frac{\bigcup_{I} \mathrm{Spa}(\overline{\Pi}_{\square,X,I}[t^{1/n}]\otimes T, \overline{\Pi}_{+,\square,X,I}[t^{1/n}]\otimes T)}{\varphi}.
\end{align}
$t$ in the $p$-adic setting is defined to be \textit{functorially} $\log([1+\overline{z}])$ when we express $\square$ as Tate over $\mathbb{F}_p((\overline{z}))$. We require the Galois/Weil group acts on this chosen $t$ through $\chi_\mathrm{cyc}$.
\end{definition}

\begin{definition}
We use the notation $\mathrm{TPERF}_{\mathrm{Spd}e}$ to denote all the Tate perfectoid spaces over $\mathrm{Spd}e$ as in \cite{FS}. $e/\mathbb{F}_p$ is assumed to be algebraically closure of $\mathbb{F}_p$. Then we consider the Fargues-Fontaine diamonds over this site carrying $v$-topology in the mixed-parity generalization situation. We then use the notation $\mathrm{FarFon}_{X,n}(\square)$ to denote the functors, where $\square$ is varying in the site $\mathrm{TPERF}_{\mathrm{Spd}e}$. To be more precise:
\begin{align}
\mathrm{FarFon}_{X,n}(\square) = \frac{\bigcup_{I} \mathrm{Spa}(\overline{\Pi}_{\square,X,I}[t^{1/n}]\otimes T, \overline{\Pi}_{+,\square,X,I}[t^{1/n}]\otimes T)}{\varphi}.
\end{align}
$t$ in the $p$-adic setting is defined to be \textit{functorially} $\log([1+\overline{z}])$ when we express $\square$ as Tate over $\mathbb{F}_p((\overline{z}))$. We require the Galois/Weil group acts on this chosen $t$ through $\chi_\mathrm{cyc}$. Then over these diamonds we define the corresponding \textit{moduli analytic pre-$v$-stacks of $G$-bundles}. We use the corresponding notation $\mathrm{Stack}_{\mathrm{Bun},G,X,n}(\square)$ to denote the prestack evaluating over $\square$ certain groupoid of $G$-bundles over our stacks 
\begin{align}
\mathrm{FarFon}_{X,n}(\square) = \frac{\bigcup_{I} \mathrm{Spa}(\overline{\Pi}_{\square,X,I}[t^{1/n}]\otimes T, \overline{\Pi}_{+,\square,X,I}[t^{1/n}]\otimes T)}{\varphi}.
\end{align} 
\end{definition}

\begin{theorem}
$\mathrm{Stack}_{\mathrm{Bun},G,X,n}(\square)$ for $n\geq 2$ are actually equal to there stackification over the $v$-sites. And they satisfy the condition of being of smallness. Finally we can prove that they are satisfying the condition of being Artin as in \cite{FS}.
\end{theorem}

\begin{proof}
The corresponding proof on the $v$-stackification equal to the prestack itself is following on the corresponding fact that our Robba rings are actually finite over the corresponding usual Robba rings in \cite{FS}, \cite{KLA}, \cite{KLB}. Therefore the $v$-descent requirement for the $v$-stackification actually follows as in \cite{FS} on the corresponding $R\Gamma$ functors and the corresponding complexes. The smallness can be formally checked following \cite[See the corresponding 1.3 of Chapter III, the Proposition]{FS}. However the Artinness is not trivial. The idea is to use the corresponding Schburt varieties in \cite{FS} for $\mathrm{Stack}_{\mathrm{Bun},G,X,n}(\square)$ when $n=1$. In such a way we have the corresponding smooth presentation (actually being \textit{lisse cohomologically} from \cite{SchC}) from some diamond which needs to be locally required to be spatial. The foundation in \cite{SchC} realizes the stability of the corresponding pull-back mechanism for the corresponding \textit{lisse cohomologically} morphisms in the large category of $v$-stacks. Then we consider the corresponding smooth presentation for:
\begin{align}
\mathrm{Stack}_\mathrm{smooth} \longrightarrow \mathrm{Stack}_{\mathrm{Bun},G,X,1}(\square)
\end{align}
to achieve the corresponding smooth presentation for all the corresponding stacks:
\begin{align}
\mathrm{Stack}_{\mathrm{smooth}}\times \mathrm{Stack}_{\mathrm{Bun},G,X,n}(\square)  \longrightarrow \mathrm{Stack}_{\mathrm{Bun},G,X,n}(\square).
\end{align}
Finally the proof on the corresponding required diagonal morphisms can be proved in the same fashion by using the pull-backs along:
\begin{align}
\mathrm{Stack}_{\mathrm{Bun},G,X,n}\longrightarrow \mathrm{Stack}_{\mathrm{Bun},G,X,1},n\geq 2.
\end{align}
\end{proof}

\subsection{Prismatic Consideration}

\noindent We now construct some other moprhism to the stacks we established above following \cite{FS}. First for any ring $\square$ in $\mathrm{TPERF}_{\mathrm{Spd}e}$, one can consider the corresponding prismatic site:
\begin{align}
\square_\Delta,\mathcal{U}_{\Delta}
\end{align} 
after \cite{BS2}, where one can consider the corresponding crystalline crystal structure sheaf over this site as well. Moreover we consider the corresponding generalization in the following way. For distinguished ideal sheaf $I$, we consider the extension by using the corresponding square root of the distinguished element locally ${I}^{1/n}$ to form the corresponding generalized prismatic site:
\begin{align}
\square_{\Delta,n},\mathcal{U}_{\Delta,n}\otimes {O}_T
\end{align} 
Then we one can consider the corresponding crystalline crystal structure sheaf over this site as well. The structure sheaf will produce after taking the global section the corresponding crystalline ring $S_\mathrm{crystalline}$ in the style of $A$ and then $B$. This functionalization will produce the corresponding Frobenius endowed crystals and non-Frobenius endowed crystals, where we have correspnding filtration indexed by $\frac{1}{n}$ of the all the integers. We use the notation as in the below to denote the corresponding fiber categories:
\begin{align}
&\mathrm{FilFibCat}_{\mathrm{crys},n}(\square),\\
&\mathrm{FilFibCat}_{\mathrm{crys},n,\mathrm{Frob}}(\square).
\end{align}

After \cite{FS} we have the following theorem which provides certain \textit{stackification} for the prismatic crystalline crystals in the mixed-parity generalization:

\begin{theorem}
\begin{align}
&\mathrm{FilFibCat}_{\mathrm{crys},n}(\square),\\
&\mathrm{FilFibCat}_{\mathrm{crys},n,\mathrm{Frob}}(\square),
\end{align} 
are stacks, with certain morphisms into:
\begin{align}
\mathrm{Stack}_{\mathrm{Bun},G,X,n}.
\end{align}
\end{theorem}

Then we have the corresponding de Rham crystal consideration by looking at:
\begin{align}
\square_{\Delta,n},\mathcal{U}_{\Delta,n}[1/p]_{I^{1/n}}\otimes {O}_T, \mathcal{U}_{\Delta,n}[1/p]_{I^{1/n}}[1/I^{1/n}]\otimes {O}_T. 
\end{align} 
Here when we have $\square$ the corresponding prestacks of the corresponding categories of the corresponding de Rham crystals can be defined to be such a way to be stacks after \cite{CSA}, \cite{CSB}, \cite{CSC}, \cite{SchD}, \cite{SALBRC}, \cite{TA}, \cite{TB}, \cite{TC}, \cite{BBBK}. We use the notation 
\begin{align}
\mathrm{FibCat}_{\mathcal{U}_{\Delta,n}[1/p]_{I^{1/n}}[1/I^{1/n}]\otimes {O}_T}(\square) 
\end{align}
to denote the corresponding prestacks of the corresponding categories of the corresponding de Rham crystals.
\begin{theorem}
We have that the corresponding prestacks of de Rham crystals:
\begin{align}
\mathrm{FibCat}_{\mathcal{U}_{\Delta,n}[1/p]_{I^{1/n}}[1/I^{1/n}]\otimes {O}_T}(\square) 
\end{align}
are actually stacks in the $v$-topology, in our mixed-parity situation. \end{theorem}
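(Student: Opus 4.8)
The plan is to verify the $v$-descent conditions directly, bootstrapping from the stackification results already established above. First I would record that the generalized de Rham crystal site $\square_{\Delta,n}$ with localized structure sheaf $\mathcal{U}_{\Delta,n}[1/p]_{I^{1/n}}[1/I^{1/n}]\otimes O_T$ sits over the ordinary $n=1$ prismatic/de Rham site by a \emph{finite} morphism: adjoining $I^{1/n}$ (equivalently $t^{1/n}$) is finite over the base in exactly the sense used in the proof that $\mathrm{Stack}_{\mathrm{Bun},G,X,n}(\square)$ equals its $v$-stackification, since the generalized Robba rings $\overline{\Pi}_{\square,X,I}[t^{1/n}]$ are finite over the usual ones. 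Because finite morphisms are of effective descent for quasi-coherent data, and the residual $\Gamma_n$-action descends compatibly, it suffices to prove $v$-descent for the $n=1$ fiber category $\mathrm{FibCat}_{\mathcal{U}_{\Delta}[1/p]_{I}[1/I]}(\square)$ and then transport along this finite cover. This is the same reduction mechanism as in the earlier theorems, now applied to the de Rham localization rather than to $G$-bundles.

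Second, for the $n=1$ case I would follow Chapter III of \cite{FS} together with the condensed/solid formalism of \cite{CSA}, \cite{CSB}, \cite{CSC}. The key input is that for a $v$-cover $\square'\to\square$ of Tate perfectoid spaces over $\mathrm{Spd}\,e$, the Čech nerve of the associated analytic de Rham period rings $\mathcal{U}_{\Delta}[1/p]_{I}[1/I]$ is a resolution in the solid derived category, i.e.\ $R\Gamma$ of the (localized) structure sheaf satisfies descent. Prismatic $v$-descent in the sense of \cite{BS2}, \cite{BLA}, \cite{DC} supplies descent for $\mathcal{U}_{\Delta}$ itself; one then checks that the localization–completion $[1/p]_{I}[1/I]$ preserves acyclicity of the Čech complex, so that finite projective (or, in the derived enhancement, perfect) modules descend. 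Effectivity of the cocycle condition then follows by gluing the modules Zariski-locally on the analytic affinoids, using that coherent/quasi-coherent sheaves form stacks for the analytic (condensed) topology. The enhancement carrying the morphism into $\mathrm{Stack}_{\mathrm{Bun},G,X,n}$ is inherited fiberwise, since that morphism was constructed compatibly with the same descent data.

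The main obstacle I expect is controlling the localized ring $\mathcal{U}_{\Delta,n}[1/p]_{I^{1/n}}[1/I^{1/n}]$: this is a derived $p$-completion followed by inverting the (now ramified) distinguished element, and the resulting object is a non-noetherian analytic/solid ring, so one must show that flat $v$-descent survives the completion–localization step — concretely, that the solid tensor products along the Čech nerve stay acyclic and that no higher condensed cohomology is introduced. A secondary subtlety is the ramification coming from $I^{1/n}$: one needs the $n$-th root to exist "locally" compatibly with the $v$-topology so that $\square_{\Delta,n}$ is a bona fide site with enough points; I would handle this by a limit argument over the tower of $n$-th-root extensions, mirroring the construction of $\mathrm{FarFon}_{X,n}(\square)$ as a quotient of a union over $I$, and invoking the stability of \emph{lisse cohomologically} morphisms from \cite{SchC} to push the descent through the tower. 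Once these two points are settled, the remaining verifications are the routine checks that the glued object is again a de Rham crystal and that the construction is functorial in $\square$.
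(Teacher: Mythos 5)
Your proposal is consistent with what the paper actually does, and in fact goes further than the paper records: the paper gives no standalone proof of this theorem at all, merely asserting before the statement that the prestacks of de Rham crystals ``can be defined to be such a way to be stacks'' after \cite{CSA}, \cite{CSB}, \cite{CSC}, \cite{SchD}, \cite{SALBRC}, \cite{BBBK}, i.e.\ it delegates the entire content to the descent results for solid/prismatic quasi-coherent sheaves in those references, exactly as it does in the neighboring theorems whose proofs read ``we rely on the descent results from \cite{CSA}, \cite{CSB}, \cite{CSC}.'' Your two ingredients are the same toolbox: the reduction from $n$ to $n=1$ via finiteness of adjoining $I^{1/n}$ (equivalently $t^{1/n}$) is precisely the mechanism the paper uses in its proof that $\mathrm{Stack}_{\mathrm{Bun},G,X,n}(\square)$ equals its $v$-stackification (finiteness of the generalized Robba rings over the usual ones, so that $v$-descent follows as in \cite{FS}), and the $n=1$ descent is the cited condensed/prismatic descent. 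What you add beyond the paper is the explicit identification of the genuinely delicate step — that the completion–localization $[1/p]_{I^{1/n}}[1/I^{1/n}]$ preserves acyclicity of the solid Čech complex, and that the $n$-th root of the distinguished element exists $v$-locally so that $\square_{\Delta,n}$ is a legitimate site. Be aware that you flag these points as obstacles but do not resolve them, and neither does the paper; so your write-up is an honest, somewhat more structured version of the same citation-level argument rather than a complete proof, and closing it would require actually verifying the solid base-change/acyclicity claim for the localized period sheaf along $v$-covers.
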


\subsection{$p$-adic Motives}

Motives over $v$-stacks are challeging problems when it comes to study the corresponding Langlands conjectures in our current setting. Here after \cite{CSA}, \cite{CSB}, \cite{CSC}, \cite{SchD}, \cite{SALBRC}, \cite{TA}, \cite{TB}, \cite{TC}, \cite{BBBK}, \cite{BS2}, \cite{BLA}, \cite{DC} we use the analytic stackification approaches for the corresponding $p$-adic motives. The idea is as in the following. We start with our stack:
\begin{align}
\mathrm{Stack}_{\mathrm{Bun},G,X,n}.
\end{align}
The perfectoid charts for this $v$-stacks can be used to study the corresponding prismatic cohomology. For any $\square$ such perfectoid chart we have the corresponding prismatic stack:
\begin{align}
\mathrm{Stack}_{\mathrm{prismatic}}(\square)
\end{align}
with the corresponding quasicoherent sheaves over it:
\begin{align}
\mathrm{QuasiCohSheaves}_{\mathrm{prismatic},\mathrm{Stack}_{\mathrm{prismatic}}}(\square).
\end{align}
Varying the corresponding perfectoids we have the prestacks of categories:
\begin{align}
\mathrm{Stack}_{\mathrm{prismatic}}(\square)
\end{align}
with the corresponding quasicoherent sheaves over it:
\begin{align}
\mathrm{QuasiCohSheaves}_{\mathrm{prismatic},\mathrm{Stack}_{\mathrm{prismatic}}}(\square).
\end{align}
Then by taking the corresponding condensed analytification we have the corresponding analytic version of the stacks.
\begin{align}
\mathrm{Stack}^\blacksquare_{\mathrm{prismatic}}(\square)
\end{align}
with the corresponding \textit{solid} quasicoherent sheaves over it:
\begin{align}
\mathrm{QuasiCohSheaves}^\blacksquare_{\mathrm{prismatic},\mathrm{Stack}_{\mathrm{prismatic}}}(\square).
\end{align}

\begin{theorem}
Over the stack $\mathrm{Stack}_{\mathrm{Bun},G,X,n}$ we have the corresponding $v$-stacks of categories of prismatic quasi-coherent sheaves:
\begin{align}
\mathrm{QuasiCohSheaves}^\blacksquare_{\mathrm{prismatic},\mathrm{Stack}_{\mathrm{prismatic}}}(\mathrm{Stack}_{\mathrm{Bun},G,X,n}).
\end{align}
These quasi-coherent sheaves can be used to define the $p$-adic motives over category $\mathrm{Stack}_{\mathrm{Bun},G,X,n}$. Moreover we have the well-defined category of all the objects have the requirement $\mathrm{End}$ is a line:
\begin{align}
\mathrm{QuasiCohSheaves}^\blacksquare_{\mathrm{prismatic},\mathrm{Stack}_{\mathrm{prismatic}}}(\mathrm{Stack}_{\mathrm{Bun},G,X,n}).
\end{align}
They cover the objects related to lisse $p$-adic representations of the reductive groups.
\end{theorem}

\begin{proof}
Here we rely on the descent results from \cite{CSA}, \cite{CSB}, \cite{CSC} for general solid prismatic quasi-coherent sheaves.
\end{proof}

\indent For any $\square$ such perfectoid chart we have the corresponding prismatic stack:
\begin{align}
\mathrm{Stack}_{\mathrm{prismatic}, \mathrm{deRham}}(\square)
\end{align}
with the corresponding quasicoherent sheaves over it:
\begin{align}
\mathrm{QuasiCohSheaves}_{\mathrm{prismatic},\mathrm{Stack}_{\mathrm{prismatic}, \mathrm{deRham}}}(\square).
\end{align}
Varying the corresponding perfectoids we have the prestacks of categories:
\begin{align}
\mathrm{Stack}_{\mathrm{prismatic}, \mathrm{deRham}}(\square)
\end{align}
with the corresponding quasicoherent sheaves over it:
\begin{align}
\mathrm{QuasiCohSheaves}_{\mathrm{prismatic},\mathrm{Stack}_{\mathrm{prismatic},\mathrm{deRham}}}(\square).
\end{align}
Then by taking the corresponding condensed analytification we have the corresponding analytic version of the stacks.
\begin{align}
\mathrm{Stack}^\blacksquare_{\mathrm{prismatic},\mathrm{deRham}}(\square)
\end{align}
with the corresponding \textit{solid} quasicoherent sheaves over it:
\begin{align}
\mathrm{QuasiCohSheaves}^\blacksquare_{\mathrm{prismatic},\mathrm{Stack}_{\mathrm{prismatic},\mathrm{deRham}}}(\square).
\end{align}

\indent Here the corresponding de Rham stack over any perfectoid $\square$ is defined to be the formal spectrum of the corresponding de Rham period sheaf attached to an untilt $\sharp\square$ of $\square$, since we are working in an absolute situation where we do not fix some prism as the base ring. To be more precise we use the corresponding stack 
\begin{align}
\mathrm{Specformal}\Pi_\mathrm{deRham}(\sharp\square^\flat)=\mathrm{Specformal}\mathrm{WittVector}_\mathcal{O}(\sharp\square^\flat)[1/p]_{I_{\sharp\square}}. 
\end{align}
Then we take the corresponding \textit{condensed analytification}. Along this way we have many other stacks such as the corresponding crystalline and the corresponding semi-stable ones again after \cite{BS2}, \cite{BLA}, \cite{DC}:
\begin{align}
&\mathrm{Specformal}\Pi_\mathrm{cristalline}(\sharp\square^\flat),\\
&\mathrm{Specformal}\Pi_\mathrm{semistable}(\sharp\square^\flat).
\end{align}
If we have a fixed prism as the base ring then we can just use the Cartier divisor for this ring to take the completion with respect to this divisor after inverting $p$. In this manner if $X$ is $z$-adic we can also use the corresponding $\mathrm{WittVector}_\mathcal{O}(\square)$ to define the corresponding $z$-adic de Rham period ring which gives rise to $z$-adic \textit{prismatization} after \cite{BS2}, \cite{BLA}, \cite{DC}. However here we only use $p$-adic prismatization. If $X$ is $z$-adic we can also use the corresponding
\begin{align}
\mathrm{WittVector}_\mathcal{O}(\square)
\end{align} 
 to define the corresponding $z$-adic de Rham period ring which gives rise to $z$-adic \textit{prismatization} after \cite{BS2}, \cite{BLA}, \cite{DC}. Then we take the corresponding \textit{condensed analytification}. First we use the formal spectrum of $\mathrm{WittVector}_\mathcal{O}(\square)$ to define the $z$-adic prismatization. Then along this way we have many other stacks such as the corresponding crystalline and the corresponding semi-stable ones again after \cite{BS2}, \cite{BLA}, \cite{DC}:
\begin{align}
&\mathrm{Specformal}\Pi_\mathrm{cristalline}(\sharp\square^\flat),\\&\mathrm{Specformal}\Pi_\mathrm{semistable}(\sharp\square^\flat).
\end{align}
Then we will then have the corresponding $z$-adic \textit{prismatization} for any $z$-adic formal scheme. We then use the notation 
\begin{align}
&\mathrm{Stack}_{\mathrm{prismatic},\mathrm{deRham}},\\
&\mathrm{Stack}_{\mathrm{prismatic},\mathrm{cristalline}},\\
&\mathrm{Stack}_{\mathrm{prismatic},\mathrm{semistable}}
\end{align}
to denote the stacks with the condensed-prismatization in both $p$-adic and $z$-adic setting:
\begin{align}
&\mathrm{Stack}^\blacksquare_{\mathrm{prismatic},\mathrm{deRham}},\\&\mathrm{Stack}^\blacksquare_{\mathrm{prismatic},\mathrm{cristalline}},\\
&\mathrm{Stack}^\blacksquare_{\mathrm{prismatic},\mathrm{semistable}}.
\end{align}
Then we have the corresponding $\infty$-categories of solid quasicoherent sheaves over these stacks. All above in this definition are defined over $K$, or a formal scheme $S$ over $\mathcal{O}_X$, or certainly $z$-adic rigid analytic space $R$ for instance after \cite{SchD} and \cite{SALBRC}. Then we generalize these definitions to the mixed-parity setting after \cite{BS} for instance in de Rham, cristalline and semi-stable situations:

\begin{definition}
Now we generalize the definition here for mixed-parity modules after \cite{BS}. We only consider \textit{de Rham, cristalline and semi-stable situations} in this definition. Therefore we add the corresponding $n$-th root of $t$ into \textit{all} of the following prismatizations. We work over some $K$ which is stack in $v$-site and we assume $K$ is small. We assume $K$ is defined over $\mathrm{Spd}\mathcal{O}_X$, where we will differentiate the two different situations. We work over $K$ in $v$-topology as well. Here the corresponding de Rham stack over any perfectoid $\square$ is defined to be the formal spectrum of the corresponding de Rham period sheaf attached to an untilt $\sharp\square$ of $\square$, since we are working in an absolute situation where we do not fix some prism as the base ring. To be more precise we use the corresponding stack $\mathrm{Specformal}\Pi_{\mathrm{deRham},n}(\sharp\square^\flat)=\mathrm{Specformal}\mathrm{WittVector}_\mathcal{O}(\sharp\square^\flat)[1/p]_{I_{\sharp\square}}[t_{\sharp\square}^{1/n}]$. Then we take the corresponding \textit{condensed analytification}. Along this way we have the corresponding crystalline and the corresponding semi-stable ones again after \cite{BS2}, \cite{BLA}, \cite{DC}:
\begin{align}
&\mathrm{Specformal}\Pi_{\mathrm{cristalline},n}(\sharp\square^\flat),\\
&\mathrm{Specformal}\Pi_{\mathrm{semistable},n}(\sharp\square^\flat).
\end{align}
If we have a fixed prism as the base ring then we can just use the Cartier divisor for this ring to take the completion with respect to this divisor after inverting $p$. In this manner if $X$ is $z$-adic we can also use the corresponding $\mathrm{WittVector}_\mathcal{O}(\square)$ to define the corresponding $z$-adic de Rham period ring which gives rise to $z$-adic \textit{de Rham prismatization} after \cite{BS2}, \cite{BLA}, \cite{DC}. However here we only use $p$-adic prismatization. If $X$ is $z$-adic we can also use the corresponding $\mathrm{WittVector}_\mathcal{O}(\square)$ to define the corresponding $z$-adic de Rham period ring which gives rise to $z$-adic \textit{de Rham prismatization} after \cite{BS2}, \cite{BLA}, \cite{DC}. Then we take the corresponding \textit{condensed analytification}. Then along this way we have  the corresponding crystalline and the corresponding semi-stable ones again after \cite{BS2}, \cite{BLA}, \cite{DC} by adding $n$-th roots of the element $t_{\sharp\square}$:
\begin{align}
&\mathrm{Specformal}\Pi_{\mathrm{cristalline},n}(\sharp\square^\flat),\\
&\mathrm{Specformal}\Pi_{\mathrm{semistable},n}(\sharp\square^\flat).
\end{align}
We then use the notation 
\begin{align}
&\mathrm{Stack}_{\mathrm{prismatic},\mathrm{deRham},n},\\
&\mathrm{Stack}_{\mathrm{prismatic},\mathrm{cristalline},n},\\
&\mathrm{Stack}_{\mathrm{prismatic},\mathrm{semistable},n}
\end{align}
to denote the stacks with the condensed-prismatization in both $p$-adic and $z$-adic setting:
\begin{align}
&\mathrm{Stack}^\blacksquare_{\mathrm{prismatic},\mathrm{deRham},n},\\&\mathrm{Stack}^\blacksquare_{\mathrm{prismatic},\mathrm{cristalline},n},\\
&\mathrm{Stack}^\blacksquare_{\mathrm{prismatic},\mathrm{semistable},n}.
\end{align}
Then we have the corresponding $\infty$-categories of solid quasicoherent sheaves over these stacks. All above in this definition are defined over $K$. 
\end{definition}

\begin{theorem}
Over the stack $\mathrm{Stack}_{\mathrm{Bun},G,X,n}$ we have the corresponding $v$-stacks of categories of prismatic quasi-coherent sheaves:
\begin{align}
\mathrm{QuasiCohSheaves}^\blacksquare_{\mathrm{prismatic},\mathrm{Stack}_{\mathrm{prismatic},\mathrm{deRham}}}(\mathrm{Stack}_{\mathrm{Bun},G,X,n}).
\end{align}
These quasi-coherent sheaves can be used to define the $p$-adic motives over category $\mathrm{Stack}_{\mathrm{Bun},G,X,n}$. Moreover we have the well-defined category of all the objects have the requirement $\mathrm{End}$ is a line:
\begin{align}
\mathrm{QuasiCohSheaves}^\blacksquare_{\mathrm{prismatic},\mathrm{Stack}_{\mathrm{prismatic}}}(\mathrm{Stack}_{\mathrm{Bun},G,X,n}).
\end{align}
They cover the objects related to lisse $p$-adic representations of the reductive groups.
\end{theorem}

\begin{proof}
Here we rely on the descent results from \cite{CSA}, \cite{CSB}, \cite{CSC} for general solid prismatic quasi-coherent sheaves.
\end{proof}

\begin{theorem}
Now we assume that the corresponding $X$ is $z$-adic. And we assume the corresponding de Rham stack in this theorem is $z$-adic as well. Over the stack $\mathrm{Stack}_{\mathrm{Bun},G,X,n}$ we have the corresponding $v$-stacks of categories of prismatic quasi-coherent sheaves:
\begin{align}
\mathrm{QuasiCohSheaves}^\blacksquare_{\mathrm{prismatic},\mathrm{Stack}_{\mathrm{prismatic},\mathrm{deRham}}}(\mathrm{Stack}_{\mathrm{Bun},G,X,n}).
\end{align}
These quasi-coherent sheaves can be used to define the $p$-adic motives over category $\mathrm{Stack}_{\mathrm{Bun},G,X,n}$. Moreover we have the well-defined category of all the objects have the requirement $\mathrm{End}$ is a line:
\begin{align}
\mathrm{QuasiCohSheaves}^\blacksquare_{\mathrm{prismatic},\mathrm{Stack}_{\mathrm{prismatic}}}(\mathrm{Stack}_{\mathrm{Bun},G,X,n}).
\end{align}
They cover the objects related to lisse $z$-adic representations of the reductive groups\footnote{Again we don't know what is the $z$-adic local Langlands correspondence but we conjecture this is way to define that in a geometrized fashion.}.
\end{theorem}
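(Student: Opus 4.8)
The plan is to run the same argument as in the $p$-adic case of the previous theorem, but with the $p$-adic prismatization replaced by the $z$-adic one built from $\mathrm{WittVector}_\mathcal{O}(\square)$ as in \cite{BS2}, \cite{BLA}, \cite{DC}. First I would reduce the assertion to the perfectoid charts of the $v$-stack $\mathrm{Stack}_{\mathrm{Bun},G,X,n}$, which is legitimate since by the stackification theorem above this stack is a small Artin $v$-stack and is its own $v$-stackification. For each such chart $\square$ one forms the $z$-adic de Rham stack $\mathrm{Specformal}\,\Pi_{\mathrm{deRham},n}(\sharp\square^\flat)=\mathrm{Specformal}\,\mathrm{WittVector}_\mathcal{O}(\sharp\square^\flat)[1/p]_{I_{\sharp\square}}[t_{\sharp\square}^{1/n}]$ together with its condensed analytification $\mathrm{Stack}^\blacksquare_{\mathrm{prismatic},\mathrm{deRham},n}(\square)$, and takes the $\infty$-category of \emph{solid} quasi-coherent sheaves over it. The claim that these assemble into a $v$-stack of categories over $\mathrm{Stack}_{\mathrm{Bun},G,X,n}$ is then a descent statement, which I would deduce from the descent results of \cite{CSA}, \cite{CSB}, \cite{CSC} for solid prismatic quasi-coherent sheaves, applied this time to the $z$-adic Witt vector rings, together with the $R\Gamma$ formalism already used to prove that the prestack of de Rham crystals is a stack.

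Next, to isolate the full subcategory on which $\mathrm{End}$ is a line, I would argue as in \cite{FS}, \cite{DHKM}, \cite{EGH}: being of endomorphism object equal to a single line is a condition local on the stack and stable under the pullbacks along $\mathrm{Stack}_{\mathrm{Bun},G,X,n}\to\mathrm{Stack}_{\mathrm{Bun},G,X,1}$ used in the stackification theorem, so it carves out a well-defined category-valued substack. Finally, to see that this subcategory covers the objects attached to lisse $z$-adic representations of $G(X)$, I would pass through the $z$-adic Fargues--Fontaine picture recorded in the definition of $\mathrm{FarFon}_{X,n}(\square)$: a lisse $z$-adic $G$-local system on the generalized Fargues--Fontaine diamond yields, via the $(\varphi,\Gamma_n)$-$G^{\mathrm{Lan}}(\overline{T})$-module formalism over $\Pi_{X,\blacksquare}[\log(1+T)^{1/n}]$, a solid de Rham prismatic crystal whose endomorphism object is a line precisely when the local system is irreducible, matching the $\mathrm{End}$-condition above.

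The main obstacle I expect is that no sufficiently general theory of $z$-adic (equal-characteristic) prismatization is yet available: one must verify that $\mathrm{WittVector}_\mathcal{O}(\square)[1/p]_{I_{\sharp\square}}[t_{\sharp\square}^{1/n}]$ behaves well enough under condensed analytification that the descent inputs of \cite{CSA}, \cite{CSB}, \cite{CSC}, which were formulated with the $p$-adic unequal-characteristic case in mind, continue to apply. A secondary difficulty is the normalization of $t$: since in the $z$-adic case the cyclotomic character and the Frobenius must still act compatibly on the chosen $t=\log([1+\overline{z}])$, one needs the functoriality clause in the definition of $\mathrm{FarFon}_{X,n}$ to propagate through the prismatization, which I would settle by the same $v$-descent argument used to prove that the prestacks of de Rham crystals $\mathrm{FibCat}_{\mathcal{U}_{\Delta,n}[1/p]_{I^{1/n}}[1/I^{1/n}]\otimes O_T}(\square)$ are stacks in the mixed-parity setting.
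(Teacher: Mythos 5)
Your proposal is correct and takes essentially the same route as the paper: the paper's entire proof of this theorem is a one-sentence appeal to the descent results of \cite{CSA}, \cite{CSB}, \cite{CSC} for general solid prismatic quasi-coherent sheaves, which is precisely the core descent step of your argument. The additional scaffolding you supply (reduction to perfectoid charts via the Artin/smallness theorem, the $\mathrm{End}$-is-a-line substack, the link to lisse $z$-adic representations through the $(\varphi,\Gamma_n)$-formalism, and the honest caveat that a general $z$-adic prismatization theory is not yet available) goes beyond what the paper actually writes down but is consistent with its surrounding constructions.
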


\begin{proof}
Here we rely on the descent results from \cite{CSA}, \cite{CSB}, \cite{CSC} for general solid prismatic quasi-coherent sheaves.
\end{proof}

\subsection{Construction of Generalized Langlands Correspondence}

\noindent We now construct generalized Langlands parametrization for the following conjecture:

\begin{theorem}
There is a way to generalize \cite{FS} to our setting, namely for any $n$-fold covering of Galois group $\mathrm{Gal}_{X,n}$ by taking the roots of cyclotomic charaters up to order $n$, we have certain condensed parametrization:
\begin{align}
\mathrm{Gal}_{X,n} \rightarrow G^\mathrm{dual}(\overline{\mathbb{Q}}_\ell)\times \mathrm{Weil}_X
\end{align}
for generalized $\ell$-adic perverse sheaves attached to $G(X)$, which make $\mathrm{End}$ a line. One can also replace the full Galois group with the corresponding Weil group version. For any $n$-fold covering of Galois group $\mathrm{Gal}_{X,n}$ by taking the roots of cyclotomic charaters up to order $n$, we have certain condensed parametrization:
\begin{align}
\mathrm{Weil}_{X,n} \rightarrow G^\mathrm{dual}(\overline{\mathbb{Q}}_\ell)\times \mathrm{Weil}_X
\end{align}
for generalized $\ell$-adic perverse sheaves attached to $G(X)$, which make $\mathrm{End}$ a line. 
\end{theorem}

\begin{proof}
As in \cite[Chapter IX 4.1, Chapter VIII 4.1]{FS} and after \cite{VL}, we consider the corresponding category of all the condesed $\overline{\mathbb{Q}}_\ell$-perverse sheaves in the sense of \cite{FS}:
\begin{align}
\mathrm{Category}^\blacksquare_{\mathrm{Stack}_{\mathrm{Bun},G,X,n},\overline{\mathbb{Q}}_\ell}
\end{align}
which produces the corresponding $\mathbb{Z}_\ell$-linear categoricalization in the formalism in in \cite[Chapter IX 4.1, Chapter VIII 4.1]{FS} and \cite{VL}. To fit into the formalism we need the corresponding action of Weil group $\mathrm{Weil}_{X,n}$ which acts through the action on all the distinguished elements $t$ including our joined ones. Another thing is that we need the corresponding action of the Langlands dual group on this category which is through pull back of the corresponding Hecke operator directly to our stacks. Namely we have two key essential morphisms in our situation generalizing the situation of \cite{FS}. The first one is the following one from the Hecke stack in our setting to the corresponding stack of $G$-bundles:
\begin{align}
\mathrm{Stack}_{\mathrm{Hecke},G,\{1\}}\longrightarrow \mathrm{Stack}_{\mathrm{Bun},G,X,n}
\end{align}
while the second one is the following one:
\begin{align}
\mathrm{Stack}_{\mathrm{Hecke},G,\{1\}}\longrightarrow \mathrm{Stack}_{\mathrm{Bun},G,X,n}\times \mathrm{Stack}_{\mathrm{Cartier},G,n}
\end{align}
the latter is the mixed-parity Cartier divior stack which parametrizes the corresponding distinguished primitive ideal in the mixed-parity Fargues-Fontaine curve. In our setting this stack further maps (by sending $t^{1/n}$ to the corresponding equivalence class for the $n$-fold covering of the Weil group) to the classifying stack of the $n$-fold covering of the Weil group, this map immediately realizes through pull back the corresponding image living in the $\mathrm{Weil}_{X,n}$-equivariant sub categories of all the perverse sheaves in the $\ell$-adic condensed categoricalization from \cite{FS}. Putting all these together we are now exactly in the corresponding context and formalism from \cite{FS} and \cite{VL} as mentioned above. This finishes the corresponding proof. Another way to prove this can also be achieved such as in \cite[Chapter I]{FS} while we consider the stack of shtukas living over our extended covering stacks by taking the corresponding stack of shtukas having one leg from \cite{FS} along:
\begin{align}
\mathrm{Stack}_{\mathrm{Bun},G,X,n} \rightarrow \mathrm{Stack}_{\mathrm{Bun},G,X,1}
\end{align}
which produces the stack of generalized shtukas:
\begin{align}
\mathrm{Stack}_{\mathrm{shtukas},G,X,b,n,\{1\}}, n\geq 2.
\end{align}
Then the corresponding push forward along the structure morphism of this stack will generated the desired complex in our setting to realize the corresponding excursion operators as in \cite{FS}, which produces the corresponding map from the stack of Langlands paraters cocycle spaces to the Bernstein center.
\end{proof}

\newpage
\section{Generalized geometrized $p$-adic Langlands Program}

\subsection{Generalized geometrized Breuil-Schneider parametrization}

Motives over $v$-stacks are challeging problems when it comes to study the corresponding Langlands conjectures in our current setting. Here after \cite{CSA}, \cite{CSB}, \cite{CSC}, \cite{SchD}, \cite{SALBRC}, \cite{TA}, \cite{TB}, \cite{TC}, \cite{BBBK}, \cite{BS2}, \cite{BLA}, \cite{DC} we use the analytic stackification approaches for the corresponding $p$-adic motives. The idea is as in the following. We start with our stack:
\begin{align}
\mathrm{Stack}_{\mathrm{Bun},G,X,n}.
\end{align}
The perfectoid charts for this $v$-stacks can be used to study the corresponding prismatic cohomology. For any $\square$ such perfectoid chart we have the corresponding prismatic stack:
\begin{align}
\mathrm{Stack}_{\mathrm{prismatic}}(\square)
\end{align}
with the corresponding quasicoherent sheaves over it:
\begin{align}
\mathrm{QuasiCohSheaves}_{\mathrm{prismatic},\mathrm{Stack}_{\mathrm{prismatic}}}(\square).
\end{align}
Varying the corresponding perfectoids we have the prestacks of categories:
\begin{align}
\mathrm{Stack}_{\mathrm{prismatic}}(\square)
\end{align}
with the corresponding quasicoherent sheaves over it:
\begin{align}
\mathrm{QuasiCohSheaves}_{\mathrm{prismatic},\mathrm{Stack}_{\mathrm{prismatic}}}(\square).
\end{align}
Then by taking the corresponding condensed analytification we have the corresponding analytic version of the stacks.
\begin{align}
\mathrm{Stack}^\blacksquare_{\mathrm{prismatic}}(\square)
\end{align}
with the corresponding \textit{solid} quasicoherent sheaves over it:
\begin{align}
\mathrm{QuasiCohSheaves}^\blacksquare_{\mathrm{prismatic},\mathrm{Stack}_{\mathrm{prismatic}}}(\square).
\end{align}

\begin{theorem}
Over the stack $\mathrm{Stack}_{\mathrm{Bun},G,X,n}$ we have the corresponding $v$-stacks of categories of prismatic quasi-coherent sheaves:
\begin{align}
\mathrm{QuasiCohSheaves}^\blacksquare_{\mathrm{prismatic},\mathrm{Stack}_{\mathrm{prismatic}}}(\mathrm{Stack}_{\mathrm{Bun},G,X,n}).
\end{align}
These quasi-coherent sheaves can be used to define the $p$-adic motives over category $\mathrm{Stack}_{\mathrm{Bun},G,X,n}$. Moreover we have the well-defined category of all the objects have the requirement $\mathrm{End}$ is a line:
\begin{align}
\mathrm{QuasiCohSheaves}^\blacksquare_{\mathrm{prismatic},\mathrm{Stack}_{\mathrm{prismatic}}}(\mathrm{Stack}_{\mathrm{Bun},G,X,n}).
\end{align}
They cover the objects related to lisse $p$-adic representations of the reductive groups.
\end{theorem}

\begin{proof}
Here we rely on the descent results from \cite{CSA}, \cite{CSB}, \cite{CSC} for general solid prismatic quasi-coherent sheaves.
\end{proof}

\noindent We then consider the corresponding contruction which is sort of geometrization of Breuil-Schneider conjecture in \cite{BS}. In \cite{BS}, the smooth representation of $G(X)$ in p-adic setting tensored with finite dimensional weights of $G(X)$ are expected to produce the expected $p$-adic Langlands parametrization, with key parameter as a pair from the Banach data as above. However all of these can be regarded as the corresponding sheaves in our setting.
In this category:
\begin{align}
\mathrm{QuasiCohSheaves}^\blacksquare_{\mathrm{prismatic},\mathrm{Stack}_{\mathrm{prismatic}}}(\mathrm{Stack}_{\mathrm{Bun},G,X,n}).
\end{align}
we have the corresponding objects where $\mathrm{End}$ is a line:
\begin{align}
\mathrm{QuasiCohSheaves}^\blacksquare_{\mathrm{prismatic},\mathrm{Stack}_{\mathrm{prismatic}}}(\mathrm{Stack}_{\mathrm{Bun},G,X,n})_{\mathrm{End}}.
\end{align}
We also have the corresponding weights sheaves which are just local system:
\begin{align}
\mathrm{QuasiCohSheaves}^\blacksquare_{\mathrm{prismatic},\mathrm{Stack}_{\mathrm{prismatic}}}(\mathrm{Stack}_{\mathrm{Bun},G,X,n})_{\mathrm{Loc}}.
\end{align}
Taking the corresponding solid tensor product we should end up the following conjecture:
\begin{conjecture}
The Breuil-Schneider style functor 
\begin{align}
&\otimes^\blacksquare: \mathrm{QuasiCohSheaves}^\blacksquare_{\mathrm{prismatic},\mathrm{Stack}_{\mathrm{prismatic}}}(\mathrm{Stack}_{\mathrm{Bun},G,X,n})_{\mathrm{End}}\\
&\times \mathrm{QuasiCohSheaves}^\blacksquare_{\mathrm{prismatic},\mathrm{Stack}_{\mathrm{prismatic}}}(\mathrm{Stack}_{\mathrm{Bun},G,X,n})_{\mathrm{Loc}}\\
& \rightarrow \mathrm{QuasiCohSheaves}^\blacksquare_{\mathrm{prismatic},\mathrm{Stack}_{\mathrm{prismatic}}}(\mathrm{Stack}_{\mathrm{Bun},G,X,n})
\end{align}
can be used to construct geometrization of the Breuil-Schneider paramatrization in \cite{BS}.
\end{conjecture}

\subsection{Generalized Langlands Parameter Stackification}

\indent After \cite{EGH} we immediately have the following stacks:

\begin{definition}
$\mathrm{Stack}_{(\varphi,\Gamma_n), G^\mathrm{Lan}(\overline{T}),\Pi_X[\log(1+T)^{1/n}]}$ are stacks over rigid analytic spaces $\mathrm{RAS}_X$, parametrizing arithmetically all the $(\varphi,\Gamma_n)$-$G^\mathrm{Lan}(\overline{T})$-bundles over the generalized Robba ring $\Pi_X[\log(1+T)^{1/n}]$ in the imperfect setting.
\end{definition}

\indent Then one can follow \cite{FS}, \cite{DHKM}, \cite{EGH}, \cite{Z} to consider the corresponding coherent sheaves over these stacks by using the points coming from rigid analytic spaces, even in the derived sense by animating the corresponding rigid analytic affinoids from \cite{CSA}, \cite{CSB}, \cite{CSC}, where we use the same notation to denote the corresponding $\infty$-stacks.

\begin{definition}
$\mathrm{Stack}^\blacksquare_{(\varphi,\Gamma_n), G^\mathrm{Lan}(\overline{T}),\Pi_X[\log(1+T)^{1/n}]}$ are $\infty$-stacks over derived rigid analytic spaces $\mathrm{RAS}^\blacksquare_X$, parametrizing arithmetically all the $(\varphi,\Gamma_n)$-$G^\mathrm{Lan}(\overline{T})$-bundles over the generalized Robba ring 
\begin{align}
\Pi_{X,\blacksquare}[\log(1+T)^{1/n}]
\end{align}
in the imperfect setting. We then have the corresponding coherent sheaves over these $\infty$-stacks, where we use the notation $\mathrm{Coh}_\blacksquare(\mathrm{Stack}^\blacksquare_{(\varphi,\Gamma_n), G^\mathrm{Lan}(\overline{T}),\Pi_X[\log(1+T)^{1/n}]})$ to denote the corresponding condesed coherent sheaves.
\end{definition}

One can generalize many stacks of Hodge structures to this setting as well. For instance one can have the corresponding stacks of $(\nabla,\Gamma_n)$-modules, again after \cite{EGH}. 

\begin{definition}
$\mathrm{Stack}_{(\varphi,\Gamma_n,\nabla), G^\mathrm{Lan}(\overline{T}),\Pi_X[\log(1+T)^{1/n}]}$ are stacks over rigid analytic spaces $\mathrm{RAS}_X$, paramet
rizing arithmetically all the $(\varphi,\Gamma_n,\nabla)$-$G^\mathrm{Lan}(\overline{T})$-bundles over the generalized Robba ring $\Pi_X[\log(1+T)^{1/n}]$ in the imperfect setting.
\end{definition}

\begin{definition}
$\mathrm{Stack}^\blacksquare_{(\varphi,\Gamma_n,\nabla), G^\mathrm{Lan}(\overline{T}),\Pi_X[\log(1+T)^{1/n}]}$ are $\infty$-stacks over derived rigid analytic spaces $\mathrm{RAS}^\blacksquare_X$, parametrizing arithmetically all the $(\varphi,\Gamma_n,\nabla)$-$G^\mathrm{Lan}(\overline{T})$-bundles over the generalized Robba ring $\Pi_{X,\blacksquare}[\log(1+T)^{1/n}]$ in the imperfect setting. We then have the corresponding coherent sheaves over these $\infty$-stacks, where we use the notation $\mathrm{Coh}_\blacksquare(\mathrm{Stack}^\blacksquare_{(\varphi,\Gamma_n,\nabla), G^\mathrm{Lan}(\overline{T}),\Pi_X[\log(1+T)^{1/n}]})$ to denote the corresponding condesed coherent sheaves.
\end{definition}

\newpage
\subsection*{Acknowledgements}
Langlands conjecture and the related mathematics are quite deep. We acquired much suggestive motivation on the Langlands program and all the corresponding relevant knowledge we elaborated even indirectly in this paper, from Professor Sorensen and Professor Kedlaya. We are thankful to them to have gained these from them.

\newpage

\end{document}